\newtheorem{De}{Definition}
\newtheorem{Th}[De]{Theorem}
\newtheorem{Pro}[De]{Proposition}
\newtheorem{Le}[De]{Lemma}
\numberwithin{equation}{subsection}
\def\ss{\sigma}
\def\xto#1{\xrightarrow[]{#1}}
\let\ss\sigma
\def\id{{\sf Id}}
\def\1{^{-1}}
\def \ker{\mathop{\sf Ker}\nolimits}
\begin{document}

\title{Crossed modules and symmetric cohomology of groups}

\author[M. Pirashvili]{Mariam  Pirashvili}

\maketitle

\begin{abstract}

This paper links the third symmetric cohomology (introduced by Staic \cite{staic_h2} and Zarelua \cite{zarelua}) to crossed modules with certain properties. The equivalent result in the language of $2$-groups states that an extension of $2$-groups corresponds to an element of $HS^3$ iff it possesses a section which preserves inverses in the $2$-categorical sense. This ties in with Staic's (and Zarelua's) result regarding $HS^2$ and abelian extensions of groups.

\end{abstract}

%
%

\section{Introduction}
Let $G$ be a group and $M$ be a $G$-module.  Symmetric cohomology $HS^*(G,M)$ was introduced by M. Staic \cite{staic} as a variant of classical group cohomology. A. Zarelua's prior definition \cite{zarelua} of exterior cohomology $H^*_\lambda(G,M)$ is very closely related to this, as shown in \cite{p}. Namely, we proved in \cite{p} that the symmetric cohomology has a functorial decomposition
$$HS^*(G,M)=H^*_\lambda(G,M)\oplus H^*_\delta(G,M),$$
where $H^i_\delta(G,M)=0$ when $0\leq i\leq 4$ or $M$ has no elements of order two.

There are natural homomorphisms $\alpha^n:HS^n(G,M)\to H^n(G,M)$ (and $\beta^n:H^n_\lambda(G,M)\to H^n(G,M)$), which are isomorphisms for $n=0,1$ and a monomorphism for $n=2$. This was shown by Staic in \cite{staic_h2}. According to our results in \cite{p}, the map $\alpha^n$ is an isomorphism for $n=2$ if $G$ has no elements of order two. In this case, $\alpha^3$ is a monomorphism. More generally, $\alpha^n$ is an isomorphism if $G$ is torsion free.
 
Classical $H^2(G,M)$ classifies extensions of $G$ by $M$. Staic showed in \cite{staic_h2} that $HS^2(G,M)$ classifies a subclass of extensions of $G$ by $M$, namely those that have a section preserving inversion. 

It is also well-known that $H^3(G,M)$ classifies the so-called crossed extensions of $G$ by $M$ \cite{L1}, or equivalently $H^3(G,M)$ classifies $2$-groups $\Gamma$, with $\pi_0(\Gamma)=G$ and $\pi_1(\Gamma)=M$. 
  
The main result of this paper is that  for groups $G$ with no elements of order $2$ the group $HS^3(G,M)$ classifies crossed extensions of $G$ by $M$ with a certain condition on the section. 
  
There has been interest in the mathematical community in exploring symmetric cohomology. Several papers have already been published on this topic, e.g. \cite{problems},\cite{singh3}, \cite{singh}, \cite{todea}.

 \section{Preliminaries}
 \subsection{Preliminaries on symmetric cohomology of groups}\label{psc}
Let $G$ be a group and $M$ be a $G$-module. Recall that the group cohomology  $H^*(G,M)$ is defined as the cohomology of the cochain complex
$C^*(G,M)$, where  the group of $n$-cochains of $G$ with coefficients in $M$ is the set of functions from $G^n$ to $M$:
$C^n(G,M)=\left\{\phi:G^n\to M\right\}$ and coboundary map  $d^n:C^n(G,M)\to C^{n+1}(G,M)$ is defined by
\begin{align*}
d^n(\phi)(g_0,g_1,\cdots ,g_n)&=g_0\cdot \phi(g_1,\cdots ,g_n)\\
&+\sum_{i=1}^n(-1)^i\phi(g_0,\cdots ,g_{i-2},g_{i-1}g_i,g_{i+1},\cdots , g_n)\\
&+(-1)^{n+1}\phi(g_0,\cdots ,g_{n-1}).
\end{align*}
A cochain $\phi$ is called \emph{normalised} if $\phi(g_1,\cdots,g_n)=0$ whenever some $g_j=1$. The collection of normalised cochains is denoted by $C^*_N(G,M)$ and the classical normalisation theorem claims that the inclusion $C^*_N(G,M) \to C^*(G,M)$ induces an isomorphism on cohomology.

  In \cite{staic} Staic introduced a subcomplex $CS^*(G,M)\subset C^*(G,M)$, whose homology is known as the symmetric cohomology of $G$ with coefficients in $M$ and is denoted by $HS^*(G,M)$. The definition is based on an action of $\Sigma_{n+1}$ on $C^n(G,M)$ (for every $n$) compatible with the differential. In order to define this action, it is enough to define how the transpositions $\tau_i = (i,i + 1)$,
$1 \leq i \leq n$ act. For $\phi \in C^n(G,M)$ one defines:
$$(\tau_i \phi)(g_1, g_2, g_3, \cdots , g_n) = \begin{cases}  -g_1\phi(g_1^{-1}, g_1g_2, g_3,\cdots , g_n), \quad {\rm if}  \ i=1,\\ 
-\phi(g_1, \cdots , g_{i-2}, g_{i-1}g_i, g_i^{-1}, g_ig_{i+1},\cdots , g_n),&  1 < i < n,\\
-\phi(g_1, g_2, g_3,\cdots , g_{n-1}g_n, g_n^{-1}) , \quad  {\rm if}  \ i=n.
\end{cases}$$
Denote by  $CS^n(G,M)$ the subgroup of   the invariants of this action. That is, $CS^n(G,M)= C^n(G,M)^{\Sigma_{n+1}}$. Staic proved  that $CS^*(G,M)$ is a subcomplex of $C^*(G,M)$ \cite{staic}, \cite{staic_h2} and hence the groups $HS^*(G,M)$ are well-defined. There is an obvious natural transformation 
$$\alpha^n:HS^n(G,M)\to H^n(G,M), \  \ n\geq 0.$$
According to \cite{staic},\cite{staic_h2},  $\alpha^n$ is an isomorphism if $n=0,1$ and is a monomorphism for $n=2$. For extensive study of the homomorphism    $\alpha^n$ for $n\geq 2$ we refer to \cite{p}.

Denote by $CS^*_N(G,M)$ the intersection $CS^*(G,M)\cap C^*_N(G,M)$. Unlike to the classical cohomology the inclusion $CS_N^*(G,M)\to CS^*(G,M)$ does not always induces an isomorphism on cohomology. The groups $H_\lambda^*(G,M)=H^*(CS_N^*(G,M)$ are isomorphic to the so called \emph{exterior cohomology of groups} introduced by Zarelua in \cite{zarelua}. According to \cite{p} the canonical map 
$$\gamma_n:H_\lambda^n(G,M)\to HS^n(G,M)$$
induced by the inclusion $CS^*_N(G,M)\to CS^*(G,M)$, is an isomorphism if $n\leq 4$, or $M$ has no elements of order two.
\subsection{Symmetric extensions and $HS^2$} It is a classical fact, that $H^2(G,M)$ classifies the extension of $G$ by $M$ and $H^3(G,M)$ classifies the crossed extensions of $G$ by $M$. One can ask what objects classify the symmetric cohomology groups $HS^2(G,M)$ and $HS^3(G,M)$. The answer to this question in the dimension two was given in \cite{staic_h2}. The aim of this work is to prove a similar result in the dimension three.

  Let $G$ be a group and $M$ be a $G$-module. Recall that an extension of $G$ by $M$ is  a short exact sequence of groups
$$0\to M\xto{i} K\xto{p} G\to 0$$
such that for any $k\in K$ and $m\in M$, one has $ki(m)k^{-1}=i(p(k)m)$. An $s$-section to this extension is a map $s:G\to K$ such that $p\circ s(x)=x$ for all $x\in G$. Let ${\bf Extgr}(G,M)$ be the category whose objects are extensions of $G$ and $M$ and morphisms are commutative diagrams
$$\xymatrix{0\ar[r] &M\ar[d]^{id}\ar[r]^i &K\ar[d]\ar[r]^{p}&G\ar[r]\ar[d]^{id}&0\\  0\ar[r]& M\ar[r]^{i'} &K'\ar[r]^{p'}&G\ar[r]&0}$$
The set of connected components of the category ${\bf Extgr}(G,M)$  is denoted by ${\sf Extgr}(G,M)$. It is well-known that there exists a natural map ${\sf Extgr}(G,M)\to H^2(G,M),$  which is a bijection. To construct this map, one needs to choose an $s$-section $s$ and then define $f\in C^2(G,M)$ by
$$s(x)s(y)=i(f(x,y))s(xy).$$
One checks that $f$ is a $2$-cocycle and its class in $H^2$ is independent of the chosen $s$-section $s$. 

Let $0\to M\xto{i} K\xto{p} G\to 0$ be an extension and $s$ be a $s$-section. Then $s$ is called \emph{symmetric} if $s(x^{-1})=s(x)^{-1}$ holds for all $x\in G$. An extension is called \emph{symmetric} if it possesses a symmetric $s$-section. The symmetric extensions form a full subcategory ${\bf ExS}(G,M)$ of the category ${\bf Extgr}(G,M)$. The set of connected components of   ${\bf ExS}(G,M)$ is denoted by  ${\sf ExS}(G,M)$. The main result of \cite{staic_h2} claims that the restriction of the bijection  ${\sf Extgr}(G,M)\to H^2(G,M)$ on ${\sf ExS}(G,M)$ yields  a bijection ${\sf ExS}(G,M)\to HS^2(G,M)$. 
 \subsection{Crossed Modules}\label{23} Recall the classical relationship between third cohomology of groups and crossed modules.
 A crossed module is a
group homomorphism  $\partial : T\to R$ together with an action of
$R$ on $T$ satisfying: 
$$\partial (^rt)=r\partial (t)r^{-1} \ {\rm and} \
 ^{\partial t}s=tst^{-1}, \ r\in R, t,s\in T.$$
 It follows from the definition that the image $Im (\partial)$ 
is a normal subgroup of $R$, and the kernel $Ker (\partial)$  is in the 
center of $T$. Moreover the action of $R$ on $T$ induces an 
action of $G$ on $Ker (\partial)$, where $G=Coker \ \partial$.

A {\it morphism } from a crossed module $\partial : T\to R$ to a crossed module $\partial' : T'\to R'$ is a 
pair of group homomorphisms $(\phi:T\to T',  \psi: R \to R')$ such 
that 
$$\psi\circ \partial = \partial '\circ \phi, \ \  
\phi (^rt)=\, ^{\psi(r)}\phi (t), r\in R, t\in T.$$
For a group $G$ and for a $G$-module $M$ one denotes by ${\bf Xext}(G,M)$ 
the category of  exact sequences 
$$0\to M\to T \xto{\partial} R\to G\to 0,$$
where $\partial:T\to R$ is  a crossed module and the 
action of $G$ on $M$ induced from the crossed module structure 
coincides with the prescribed one. The morphisms in ${\bf Xext}(G,M)$ are 
commutative diagrams
$$\xymatrix{0\ar[r] &M \ar[r]\ar[d]^{id} & T \ar[d]^\phi\ar[r]^\partial  &R \ar[r]^p\ar[d]^\psi &G \ar[r]\ar[d]_{id} & 0\\
0\ar[r] &M \ar[r] & T' \ar[r]^{\partial'}  &R' \ar[r]^{p'} &G \ar[r] & 0}$$
where $(\phi, f)$ is a morphism of 
crossed modules $(T,G,\partial) \to (T',G',\partial')$. We 
let ${\sf Xext}(G,M)$  be the class of the connected  components of the 
category ${\bf Xext}(G,M)$. Objects of the category ${\bf Xext}(G,M)$ are called \emph{crossed extensions} of $G$ by $M$.

It is a classical fact (see for example \cite{L1}) that there is a canonical bijection
$$\xi:{\sf Xext}(G,M)\to H^3(G,M).$$
The map $\xi$ has the following description. Let $0\to M \to T \xto{\partial}  R\xto{p} G\to  0$ be  a crossed extension. An \emph{$s$-section} of it is a pair of maps $(s:G\to R, \ss:G\times G\to T)$ for which the following hold
$$ps(x)=x, \quad s(x)s(y)=\partial(\ss(x,y))s(xy), \ x,y\in G.$$
An $s$-section is called \emph{normalised} if $s(1)=1$ and $\ss(1,x)=1=\ss(x,1)$ for all $x\in G$. 
It is clear that every crossed extension has a normalised s-section. Any (normalised) $s$-section $(s, \sigma)$ gives rise to a (normalised) $3$-cocycle $f\in Z^3(G,M)$ defined by
\begin{equation}\label{3cycle} 
f(x,y,z)=\,^{s(x)} \sigma(y,z)\ss(x,yz)\ss(xy,z)^{-1}\ss(x,y)^{-1}\end{equation}
To make dependence of $f$ on $\sigma$ and $s$ we sometimes write $f_\sigma$ or even $f_{s,\sigma}$ instead of $f$. Then the map $\xi$ assigns the class of $f$ in $H^3(G,M)$ to the class of
 $0\to M \to T \xto{\partial}  R\xto{p} G\to  0$ in ${\sf Xext}(G,M)$.

\section{A characterisation of symmetric cocycles}

In this section we prove the following auxilary results, which will be used in the next section. 

 \begin{Le}\label{i} i) If $\phi \in CS^n_N(G,M)$, $n\geq 2$ then $\phi(g_1, \cdots, g_n)=0$,  whenever $g_{i+1}=g_i^{-1}$ for some $1\leq i\leq n-1.$ 
\end{Le}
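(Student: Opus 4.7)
The plan is to exploit the $\Sigma_{n+1}$-invariance of $\phi$ by applying the single transposition $\tau_i$ that mixes positions $i$ and $i+1$; the specific form of the action, combined with the normalisation condition, will force $\phi$ to vanish on any tuple with $g_{i+1}=g_i^{-1}$.

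I would split the argument into the two shapes of the formula for $\tau_i\phi$. For the middle case $2\le i\le n-1$, invariance under $\tau_i$ gives
\begin{equation*}
\phi(g_1,\ldots,g_n)=-\phi(g_1,\ldots,g_{i-2},g_{i-1}g_i,g_i^{-1},g_ig_{i+1},g_{i+2},\ldots,g_n).
\end{equation*}
Substituting $g_{i+1}=g_i^{-1}$ makes the argument in slot $i+1$ on the right hand side become $g_ig_i^{-1}=1$, so the right hand side vanishes by normalisation. For the boundary case $i=1$ the action of $\tau_1$ yields
\begin{equation*}
\phi(g_1,g_2,g_3,\ldots,g_n)=-g_1\,\phi(g_1^{-1},g_1g_2,g_3,\ldots,g_n),
\end{equation*}
and setting $g_2=g_1^{-1}$ puts a $1$ in slot $2$ on the right, again killing it by normalisation.

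These two cases together cover every index $1\le i\le n-1$: for $n=2$ only $i=1$ occurs, which is the boundary case; for $n\ge 3$ the middle case handles $2\le i\le n-1$. No nontrivial combinatorics or induction is required; the only step deserving a sentence of comment is the verification that indices match up correctly after applying $\tau_i$, since the transformation simultaneously rewrites positions $i-1$, $i$, and $i+1$.

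There is really no obstacle to speak of: the heart of the argument is that the particular form of the symmetric action was designed so that the relation $g_{i+1}=g_i^{-1}$ is sent by $\tau_i$ to a tuple containing the identity in some slot, and normalisation finishes the job. The only care needed is to match the formula for $\tau_i$ to the index range, and to note that the cases $i=1$ and $i=n-1$ (and, when $n=2$, their coincidence) are all accounted for.
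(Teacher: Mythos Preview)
Your argument is correct and is essentially the same as the paper's: apply the invariance condition $\tau_i\phi=\phi$ at the very index $i$ where $g_{i+1}=g_i^{-1}$, observe that the transformed tuple acquires a $1$ in some slot, and invoke normalisation. The only cosmetic difference is that you spell out the boundary case $i=1$ and the interior case $2\le i\le n-1$ separately, whereas the paper handles them in a single sentence.
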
 

\begin{proof} i)  By definition we have $(\tau_i(\phi)+\phi)(g_1,\cdots,g_n)=0$ for any $g_1,\cdots,g_n\in G$. If  $g_{i+1}=g_i^{-1}$ for some $i$, then $\tau_i(\phi)(g_1,\cdots,g_n)=0$ by the normalisation condition. Hence $\phi(g_1, \cdots, g_i,g_i^{-1},\cdots, g_n)=0$. 
 \end{proof}
The converse in general is not true, however we have the following important fact. 

\begin{Le}\label{ii}  If $\phi\in C^n_N(G,M)$ is a  cocycle, $n\geq 2$, then $\phi\in CS^n_N(G,M)$ iff  $\phi(g_1, \cdots, g_n)=0$,  whenever $g_{i+1}=g_i^{-1}$ for some $1\leq i\leq n-1.$ 

\end{Le}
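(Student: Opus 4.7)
The plan is to use Lemma~\ref{i} for the forward direction and to tackle the converse by a direct application of the cocycle identity. First I would reduce the problem: since $\Sigma_{n+1}$ is generated by the transpositions $\tau_1,\ldots,\tau_n$, it suffices to prove $\tau_i\phi=\phi$ for each $i$ given that $\phi\in C^n_N(G,M)$ is a cocycle vanishing on all tuples with a consecutive inverse pair. The uniform strategy for each $i$ would be to evaluate the cocycle identity $d\phi(h_0,\ldots,h_n)=0$ on a carefully chosen $(n+1)$-tuple in which an artificial inverse pair $(g,g^{-1})$ has been inserted, arranged so that all but two of the $n+2$ resulting summands vanish---some by normalisation (a face collapses a product to $1$) and the rest because they still carry the inserted inverse pair---while the two surviving summands reconstitute $\phi(g_1,\ldots,g_n)$ and $(\tau_i\phi)(g_1,\ldots,g_n)$.

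For the middle case $1<i<n$ I would insert the pair at the $i$th and $(i+1)$st slots, using the tuple $(g_1,\ldots,g_{i-1},g_i,g_i^{-1},g_ig_{i+1},g_{i+2},\ldots,g_n)$. Among the $n+2$ summands of $d\phi$, the head term $h_0\phi(h_1,\ldots,h_n)$, the tail term $(-1)^{n+1}\phi(h_0,\ldots,h_{n-1})$ and all internal faces except three keep the pair $g_i,g_i^{-1}$ adjacent, hence vanish by hypothesis. Of the remaining three faces, the one merging $g_i$ with $g_i^{-1}$ produces a $1$ and dies by normalisation, the face merging $g_{i-1}$ with $g_i$ reproduces (up to sign) the expression $-\phi(\ldots,g_{i-1}g_i,g_i^{-1},g_ig_{i+1},\ldots)$ that defines $(\tau_i\phi)(g_1,\ldots,g_n)$, and the face merging $g_i^{-1}$ with $g_ig_{i+1}$ collapses $g_i^{-1}\cdot g_ig_{i+1}$ to $g_{i+1}$ and gives back $\phi(g_1,\ldots,g_n)$. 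Tracking the alternating signs in $d\phi$ then yields $\tau_i\phi=\phi$.

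The boundary cases $i=1$ and $i=n$ I would handle by the same device, on the tuples $(g_1,g_1^{-1},g_1g_2,g_3,\ldots,g_n)$ and $(g_1,\ldots,g_n,g_n^{-1})$ respectively. This time the module-twisted head term $h_0\phi(h_1,\ldots,h_n)=g_1\phi(g_1^{-1},g_1g_2,\ldots,g_n)$ (for $i=1$) or the tail term $(-1)^{n+1}\phi(h_0,\ldots,h_{n-1})=(-1)^{n+1}\phi(g_1,\ldots,g_n)$ (for $i=n$) becomes one of the two survivors, with the other surviving face being the one that collapses the auxiliary product back to $g_{i+1}$ or to $g_{n-1}g_n$ of the $\tau_n\phi$ pattern. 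The whole argument is combinatorial and carries no conceptual obstacle; the main care needed is the face-by-face verification that every non-surviving summand either produces a $1$ in some slot (killed by normalisation) or preserves the artificial inverse pair (killed by hypothesis), so that the two survivors with matching signs deliver the desired identity $\tau_i\phi=\phi$.
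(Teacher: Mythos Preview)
Your proposal is correct and essentially identical to the paper's argument: you invoke Lemma~\ref{i} for the forward direction and, for the converse, substitute exactly the same $(n+1)$-tuples into the cocycle identity (inserting $g_i,\,g_i^{-1},\,g_ig_{i+1}$ for $1<i<n$, and the boundary variants $(g_1,g_1^{-1},g_1g_2,g_3,\ldots,g_n)$ and $(g_1,\ldots,g_n,g_n^{-1})$ for $i=1,n$), then kill all but two terms by normalisation and the inverse-pair hypothesis. The only cosmetic difference is that the paper phrases the conclusion as $\tau_i\phi+\phi=0$ rather than $\tau_i\phi=\phi$, which is just the invariance condition rewritten with the built-in sign from the definition of the $\Sigma_{n+1}$-action.
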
 

\begin{proof} Thanks to Lemma \ref{i} we need to prove that $\tau_i(\phi)+\phi=0$, if  $\phi(g_1, \cdots, g_n)=0$,  whenever $g_{k+1}=g_k^{-1}$ for some $1\leq k\leq n-1.$ 

By assumption
\begin{align*}
x_1\phi(x_2,\cdots ,x_{n+1})+\sum_{k=1}^n(-1)^k\phi(x_1,\cdots x_{k}x_{k+1},\cdots , x_{n+1})&\\
+(-1)^{n+1}\phi(x_1,\cdots ,x_{n})=0.
\end{align*}
for any $x_1,\cdots,x_{n+1}\in G$.
First we take 
$$x_k=\begin{cases} g_1, & k=1,\\ g^{-1}_1, & k=2,\\ g_1g_2,&k=3,\\ g_{k-1},& k\geq 4.\end{cases}
$$
to obtain
$$(\tau_1\phi+\phi)(g_1,\cdots, g_n)=0.$$
Next, fix $1<i<n$ and put
$$x_k=\begin{cases} g_k, & k\leq i,\\ g^{-1}_i, & k=i+1,\\ g_ig_{i+1},&k=i+2,\\ g_{k-1},&k\geq i+3.\end{cases}
$$
to obtain
$$((-1)^{i}\tau_i\phi+(-1)^{i+2}\phi)(g_1,\cdots, g_n)=0.$$
Thus $\tau_i\phi+\phi=0$, $1<i<n$. 

Finally, we take
$$x_k=\begin{cases} g_k, & k\leq n,\\ g_{n}^{-1},& k=n+1.\end{cases}
$$
to obtain
$$((-1)^{n-1}\tau_n\phi+(-1)^{n+1}\phi)(g_1,\cdots, g_n)=0.$$
Thus $\tau_n\phi+\phi=0$ and Lemma follows.
\end{proof}

In particular a cocycle  $\phi\in C^3_N(G,M)$ is symmetric (and hence defines a class in $HS^3(G,M)=H^3_\lambda(G,M)$ iff
$$\phi(x,x^{-1},y)=\phi(x,y,y^{-1})$$
for all $x,y\in G$.

Next, Lemma helps us to distinguish boundary elements in $CS^3_N(G,M)$.

\begin{Le} Suppose $\phi(x,y,z)$ is a (normalised) symmetric cocycle. Also suppose that it is a coboundary: so there exists a $g(x,y) \in C^2_N(G,M)$ such that
 $$\phi(x,y,z) = xg(y,z)-g(xy,z)+g(x,yz)-g(x,y).$$
 Then $g$ is symmetric iff $ g(x,x^{-1})=0$  for all $x\in G$.
\end{Le}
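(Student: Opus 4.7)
The plan rests on Lemmas \ref{i} and \ref{ii} in the form they apply to small $n$, together with the elementary fact that $\Sigma_3$ is generated by the adjacent transpositions $\tau_1$ and $\tau_2$. The forward implication is immediate: if $g\in CS^2_N(G,M)$, then Lemma \ref{i} with $n=2$ specialises to $g(x,x^{-1})=0$ for every $x\in G$, so there is nothing to do on that side.

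For the converse, I would exploit the two symmetry identities for $\phi$ provided by Lemma \ref{ii} with $n=3$, namely $\phi(x,x^{-1},z)=0$ and $\phi(x,y,y^{-1})=0$. The guiding expectation is that, via the coboundary formula $\phi=dg$, each of these two identities for $\phi$ translates into exactly one of the two transposition invariances $\tau_1 g = g$ and $\tau_2 g = g$ that together mean $g\in CS^2_N(G,M)$.

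Concretely, to extract $\tau_1 g = g$, I would expand
$$0=\phi(x,x^{-1},z) = xg(x^{-1},z)-g(1,z)+g(x,x^{-1}z)-g(x,x^{-1}).$$
Normalisation of $g$ kills $g(1,z)$ and the standing hypothesis kills $g(x,x^{-1})$, leaving $g(x,x^{-1}z)=-xg(x^{-1},z)$. Substituting $z=xy$ then gives $g(x,y)=-xg(x^{-1},xy)$, which is exactly $(\tau_1 g)(x,y)=g(x,y)$. For $\tau_2 g = g$, I would similarly expand
$$0=\phi(x,y,y^{-1}) = xg(y,y^{-1})-g(xy,y^{-1})+g(x,1)-g(x,y),$$
use the hypothesis applied to $y$ to kill $g(y,y^{-1})$ and normalisation to kill $g(x,1)$, and read off $g(xy,y^{-1})=-g(x,y)$, which is exactly $(\tau_2 g)(x,y)=g(x,y)$. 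Since $\tau_1$ and $\tau_2$ generate $\Sigma_3$, this places $g$ in $CS^2_N(G,M)$.

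There is essentially no serious obstacle: the calculation is mechanical once one notices that the two vanishing conditions on a symmetric $3$-cocycle $\phi$ are precisely what is required to enforce the two transposition invariances on its primitive $g$. The only care required is bookkeeping of the signs and of the substitutions $z\mapsto xy$ in the first case, so that each $\tau_i$-identity is read off cleanly from the corresponding cocycle identity without an extra minus sign creeping in.
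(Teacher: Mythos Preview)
Your proposal is correct and follows essentially the same approach as the paper: both directions proceed exactly as you describe, expanding $\phi(x,x^{-1},z)=0$ and $\phi(x,y,y^{-1})=0$ via the coboundary formula and simplifying with normalisation and the hypothesis $g(x,x^{-1})=0$ to obtain the two transposition identities for $g$. The only cosmetic difference is that the paper derives the forward direction by unwinding the symmetric condition and setting $y=x^{-1}$ directly rather than invoking Lemma~\ref{i}, but this is the same argument.
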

\begin{proof} Recall that $g$ is symmetric iff 
$$g(x,y)=-xg(x^{-1},xy)=-g(xy,y^{-1}).$$
If these conditions hold, we can  take $y=x^{-1}$ to obtain
$$g(x,x^{-1})=-g(1,x)=0,$$
because $g$ is normalised. Conversely, assume $g(x,x^{-1})=0$ for all $x\in G$. Since $\phi$ is symmetric we have
$$0=\phi(x,y,y^{-1})=xg(y,y^{-1})-g(xy,y^{-1})+g(x,1)-g(x,y)$$
and
$$0=\phi(x,x^{-1},z)=xg(x^{-1},z)-g(1,z)+g(x,x^{-1}z)+g(x,x^{-1}).$$
Since $g$ is normalized we have $g(x,1)=g(1,z)=0$. By assumption, we also have $g(y,y^{-1})=0=g(x,x^{-1})$. Hence,
$$g(x,y)+g(xy,y^{-1})=0 \ \ {\rm and} \ \  g(x,x^{-1}z)+g(x,x^{-1}z)=0.$$
Replacing $z$ by $xy$ in the last equality, one gets symmetric conditions on $g$.
\end{proof}

\section{Third symmetric cohomology and crossed modules}

We start with proving the following result, which links symmetric cocycles and crossed modules. Our notations are the same as at the end of Section \ref{23}.

\begin{Pro}\label{3} The class of  $0\to M \to T \xto{\partial}  R\xto{p} G\to  0$ in ${\sf Xext}(G,M)$  lies in the image of the composite map
$$HS^3(G,M)\xto{\alpha^3} H^3(G,M)\xto{\xi^{-1}} {\sf Xext}(G,M)$$
iff the crossed extension $0\to M \to T \xto{\partial}  R\xto{p} G\to  0$ has a normalised $s$-section $(s,\sigma)$ for which the following two identities hold
\begin{align*}
^{s(x)}\sigma(x^{-1},y)\sigma(x,x^{-1}y)&=\sigma(x,x^{-1}),\\
 \sigma(x,y)\sigma(xy,y^{-1})&=\,^{s(x)}\sigma(y,y^{-1}).
 \end{align*}
\end{Pro}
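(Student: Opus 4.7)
The plan is to recognise that the two identities in the statement are precisely the vanishing of the associated $3$-cocycle $f_{s,\sigma}$ on tuples with adjacent mutually inverse entries, and then to apply Lemma \ref{ii}. Using the formula \eqref{3cycle} together with the normalisations $\sigma(1,y)=\sigma(x,1)=1$, a direct computation gives
\[
f_{s,\sigma}(x,x^{-1},y) = {}^{s(x)}\sigma(x^{-1},y)\cdot\sigma(x,x^{-1}y)\cdot\sigma(x,x^{-1})^{-1},
\]
\[
f_{s,\sigma}(x,y,y^{-1}) = {}^{s(x)}\sigma(y,y^{-1})\cdot\sigma(xy,y^{-1})^{-1}\cdot\sigma(x,y)^{-1}.
\]
Thus the first identity of the proposition is equivalent to $f_{s,\sigma}(x,x^{-1},y)=0$ and the second to $f_{s,\sigma}(x,y,y^{-1})=0$. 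Since $f_{s,\sigma}$ is a normalised $3$-cocycle, Lemma \ref{ii} then identifies the conjunction of these two identities with the membership $f_{s,\sigma}\in CS^3_N(G,M)$.

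Given this reformulation, the ``if'' direction is essentially formal: if $(s,\sigma)$ satisfies the two identities, then $f_{s,\sigma}$ is a normalised symmetric cocycle defining a class in $HS^3(G,M)$ whose image under $\alpha^3$ is $[f_{s,\sigma}]=\xi([\text{extension}])$ in $H^3(G,M)$, so the class of the extension lies in the image of $\xi^{-1}\circ\alpha^3$.

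For the converse, I would start with an arbitrary normalised $s$-section $(s,\sigma_0)$ and modify $\sigma_0$. By hypothesis there is a class in $HS^3(G,M)$ whose image under $\alpha^3$ equals $[f_{s,\sigma_0}]$; since $\gamma_3$ is an isomorphism, this class has a normalised representative $\phi\in CS^3_N(G,M)$ cohomologous to $f_{s,\sigma_0}$ in $H^3(G,M)$. Both cocycles being normalised, the classical normalisation theorem produces a normalised $h\in C^2_N(G,M)$ with $\phi-f_{s,\sigma_0}=dh$. Set $\sigma(x,y):=\sigma_0(x,y)\cdot i(h(x,y))$. Since $\im(i)\subset\ker\partial$ and $h$ is normalised, $(s,\sigma)$ is again a normalised $s$-section, and a short calculation --- using that $\ker\partial$ lies in the centre of $T$ (so the $i(\cdot)$-factors may be freely collected) and that the $R$-action on $i(M)$ factors through $p$ --- yields the perturbation formula $f_{s,\sigma} = f_{s,\sigma_0}+dh = \phi$. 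Since $\phi$ is symmetric, the first paragraph applied to $(s,\sigma)$ delivers the two required identities.

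The principal obstacle is the perturbation formula $f_{s,\sigma_0\cdot i(h)}=f_{s,\sigma_0}+dh$: establishing it requires carefully pushing all $i(\cdot)$-factors in the expanded expression through the $\sigma_0$-factors, which is possible exactly because $\ker\partial\subset Z(T)$ by the second crossed module axiom. With this formula in place, the remainder of the argument is straightforward bookkeeping.
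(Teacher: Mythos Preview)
Your proof is correct and follows the paper's approach: reduce via Lemma~\ref{ii} to the vanishing of $f_{s,\sigma}(x,x^{-1},y)$ and $f_{s,\sigma}(x,y,y^{-1})$, then read these off from \eqref{3cycle} using the normalisation of $\sigma$. You are in fact more thorough than the paper on the converse direction, explicitly supplying the perturbation $\sigma=\sigma_0\cdot i(h)$ (and the formula $f_{s,\sigma}=f_{s,\sigma_0}+dh$) needed to pass from ``the cohomology class lies in $\im\alpha^3$'' to ``some $s$-section has symmetric associated cocycle''; the paper's proof stops after the cocycle-level equivalence and leaves this step implicit.
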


\begin{proof} In fact $f=f_{\sigma,\tau}$ is symmetric iff
$$f(x,x^{-1},y)=0=f(x,y,y^{-1})$$
thanks to Lemma \ref{ii}.
By definition of $f$ these conditions are equivalent to 
$$^{s(x)}\sigma(x^{-1},y)\sigma(x,x^{-1}y)\sigma(1,y)^{-1}\sigma(x,x^{-1})^{-1}=1$$
and
$$^{s(x)}\sigma(y,y^{-1})\sigma(x,1)\sigma(xy,y^{-1})^{-1}\sigma(x,y)^{-1}=1$$
Since $\sigma(1,-)=1=\sigma(-,1)$, we obtain 
$$^{s(x)}\sigma(x^{-1},y)\sigma(x,x^{-1}y)=\sigma(x,x^{-1})$$
and
$$^{s(x)}\sigma(y,y^{-1})=\sigma(x,y)\sigma(xy,y^{-1})$$
and we are done.

\end{proof}

\begin{De} A normalised $s$-section $(s,\ss)$ of a crossed extension $$0\to M \to T \xto{\partial}  R\xto{p} G\to  0$$ is called weakly symmetric if the following identities hold
\begin{itemize}
\item[i)] $s(x^{-1})=s(x)^{-1}$,
\item[ii)] $\ss(x,x^{-1})=1$, $x,y\in G$.
\end{itemize}
\end{De}

We have the following easy fact.
\begin{Le}\label{v} Let $G$ be  a group which has no elements of order two. Then  any  crossed extension $0\to M \to T \xto{\partial}  R\xto{p} G\to  0$ has a weakly symmetric $s$-section.
\end{Le}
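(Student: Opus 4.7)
The plan is to construct a weakly symmetric normalised $s$-section in two stages: first build a set-theoretic section $s : G \to R$ satisfying (i), then extend it to a normalised $s$-section $(s,\sigma)$, and finally correct $\sigma$ on the antidiagonal pairs $(x, x^{-1})$ so that (ii) holds.

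For the first stage, the hypothesis that $G$ has no elements of order two is used in exactly one place: it guarantees that $x \neq x^{-1}$ for every $x \in G \setminus \{1\}$, so the involution $x \mapsto x^{-1}$ partitions $G \setminus \{1\}$ into genuine two-element orbits. Choose (by the axiom of choice) a set $S$ of representatives for these orbits, pick for each $x \in S$ an arbitrary lift $s(x) \in R$ with $p(s(x)) = x$, set $s(1) := 1$, and define $s(x^{-1}) := s(x)^{-1}$ for $x \in S$. This yields a normalised set-theoretic section satisfying (i).

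For the second stage, since $s(x)s(y)s(xy)^{-1}$ lies in $\ker p = \im \partial$, one picks any preimage under $\partial$ to define $\sigma(x,y) \in T$, setting $\sigma(1,y) = 1 = \sigma(y,1)$ to ensure normalisation. Property (i) then yields $\partial \sigma(x, x^{-1}) = s(x) s(x^{-1}) = 1$, so each $\sigma(x,x^{-1})$ lies in $\ker \partial = M$ (identified with its image in $T$). Define a normalised cochain $h : G \times G \to M$ by $h(x, x^{-1}) := \sigma(x, x^{-1})^{-1}$ for all $x \in G$ and $h(x, y) := 1$ whenever $xy \neq 1$, and replace $\sigma$ by $\sigma'(x,y) := \sigma(x,y) \cdot h(x,y)$. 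Because $h$ takes values in $\ker \partial$, we have $\partial \sigma' = \partial \sigma$, so $(s, \sigma')$ remains an $s$-section; normalisation survives since $h$ vanishes on $\{1\} \times G$ and $G \times \{1\}$; and $\sigma'(x, x^{-1}) = 1$ by construction, yielding (ii).

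The only consistency check is that the antidiagonal modification at $(1,1)$ does not spoil normalisation, which is automatic because $\sigma(1,1) = 1$ forces $h(1,1) = 1$. No other obstacle arises; the whole role of the no-two-torsion hypothesis is to enable stage one, since without it a fixed point $x = x^{-1}$ of the involution would impose the constraint $s(x)^2 = 1$ in $R$, which need not hold for an arbitrary lift.
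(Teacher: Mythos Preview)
Your proof is correct and follows essentially the same approach as the paper: both use the no-two-torsion hypothesis to partition $G \setminus \{1\}$ into inversion orbits and build $s$ with $s(x^{-1}) = s(x)^{-1}$, then observe that $s(x)s(x^{-1})s(1)^{-1} = 1$ so that $\sigma(x,x^{-1}) = 1$ is attainable. The only difference is that the paper simply \emph{chooses} $\sigma(x,x^{-1}) = 1$ directly (since $1 \in T$ is a valid preimage of $1 \in R$ under $\partial$), whereas you first pick an arbitrary $\sigma$ and then correct it by the central cochain $h$; your correction step works but is unnecessary.
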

\begin{proof} In this case $G\setminus \{1\}$ is a disjoint union of two element subsets of the form $\{x,x^{-1}\}$, $x\not=1$.  Let us choose a representative in each class. If $x$ is a representative, we set $s(x)$  to be an element in $p^{-1}(x)$. We then extend $s$ to whole $G$ by $s(1)=1$ and $s(x^{-1})=s(x)^{-1}$, where $x$ is a representative. We see that for $y=x^{-1}$, one has  $s(x)s(y)s(xy)^{-1}=1$. Thus one can choose $\ss$  with property $\ss(x,x^{-1})=1$ and lemma follows. 
\end{proof}
\begin{De}\label{6} A weakly symmetric  $s$-section $(s,\ss)$ of a crossed extension $$0\to M \to T \xto{\partial}  R\xto{p} G\to  0$$  is called  symmetric if the following identities hold
\begin{itemize}
\item[i)] $\ss(x,y) \cdot \, ^{s(x)}\ss(x^{-1},xy)=1$, $x,y\in G$.
\item[ii)] $\ss(x,y)\cdot \ss(xy,y^{-1})=1$, $x,y\in G$.
\end{itemize}
A crossed extension $0\to M \to T \xto{\partial}  R\xto{p} G\to  0$ is called symmetric if it has a symmetric $s$-section.
\end{De}


Symmetric crossed extensions of $G$ by $M$ form a subset ${\sf XextS}(G,M)$ of the set of ${\sf Xext}(G,M)$.

\begin{Th} Let $G$ be a group which has no elements of order two, then there is a natural bijection
$$HS^3(G,M)\cong {\sf XextS}(G,M).$$
\end{Th}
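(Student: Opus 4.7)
The plan is to show that the injective composite $\Psi := \xi^{-1}\circ\alpha^3 \colon HS^3(G,M)\to {\sf Xext}(G,M)$ has image exactly ${\sf XextS}(G,M)$. Injectivity holds because $\xi$ is a bijection and, under the no-2-torsion hypothesis, $\alpha^3$ is injective (introduction, citing \cite{p}). The forward inclusion ${\sf XextS}(G,M)\subseteq\Psi(HS^3(G,M))$ is immediate from Proposition \ref{3}: given a symmetric $s$-section $(s,\sigma)$ in the sense of Definition \ref{6}, substituting $y\mapsto x^{-1}y$ in Definition \ref{6}(i) yields $\sigma(x,x^{-1}y)\cdot\,^{s(x)}\sigma(x^{-1},y)=1$, which together with $\sigma(x,x^{-1})=1$ is the first identity of Proposition \ref{3}; the second reduces similarly from Definition \ref{6}(ii) and $\sigma(y,y^{-1})=1$.

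For the reverse inclusion, given $c\in HS^3(G,M)$ let $E=\Psi(c)$. For a weakly symmetric $s$-section, the two Proposition \ref{3} identities collapse (via $\sigma(x,x^{-1})=1=\sigma(y,y^{-1})$) to Definition \ref{6}(i),(ii); hence it suffices to exhibit a weakly symmetric $s$-section $(s,\sigma)$ of $E$ whose cocycle is symmetric. Start from any weakly symmetric $(s,\sigma_1)$ provided by Lemma \ref{v}. A direct computation gives $f_{s,\sigma_1}(x,x^{-1},x)=\,^{s(x)}\sigma_1(x^{-1},x)\in M$. Modify $\sigma_1$ by a normalised $M$-valued cochain $m$ with $m(x,x^{-1})=0$ and $m(x^{-1},x)=-\sigma_1(x^{-1},x)$ on representatives of the non-trivial pairs $\{x,x^{-1}\}$, obtaining $\sigma_1'$ that remains weakly symmetric and now satisfies $f_{s,\sigma_1'}(x,x^{-1},x)=0$; the no-2-torsion hypothesis ensures $(x,x^{-1})\ne(x^{-1},x)$, making the two assignments on $m$ compatible.

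Choose any symmetric cocycle $\tilde f$ representing $\alpha^3(c)\in H^3(G,M)$ and write $\tilde f = f_{s,\sigma_1'}+dh$ for some $h\in C^2_N(G,M)$. For any $u\in C^1_N(G,M)$, the cochain $h':=h+du$ still satisfies $dh'=dh$, and $\sigma := h'\cdot\sigma_1'$ (with $h'$ viewed in the central subgroup $M\subset T$) gives $f_{s,\sigma}=\tilde f$, symmetric. Weak symmetry of $\sigma$ amounts to $h'(x,x^{-1})=0$, which is the linear system
\[u(x)+x\cdot u(x^{-1})=-h(x,x^{-1}),\qquad x\in G.\]
For each pair $\{x,x^{-1}\}$ this is a pair of equations in two unknowns; consistency reduces to $x\cdot h(x^{-1},x)=h(x,x^{-1})$, which follows from $dh(x,x^{-1},x)=\tilde f(x,x^{-1},x)-f_{s,\sigma_1'}(x,x^{-1},x)=0-0=0$ (using Lemma \ref{i} for the symmetric cocycle $\tilde f$ and the preliminary modification for $f_{s,\sigma_1'}$). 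Free choice of $u(x)$ on pair representatives then determines $u(x^{-1})$ uniquely, completing the construction.

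The main obstacle is precisely this consistency argument: solving for $u$ forces the preliminary normalisation of $\sigma_1'$ on the anti-diagonal entries $(x^{-1},x)$. Both modification steps rely crucially on $G$ having no elements of order $2$, first through Lemma \ref{v} (securing the initial weakly symmetric section), and then through the distinctness $(x,x^{-1})\ne(x^{-1},x)$ enabling the independent definition of $m$ and of $u$ on pair representatives.
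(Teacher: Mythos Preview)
Your argument is correct and follows the same strategy as the paper's: show that the injective composite $\xi^{-1}\circ\alpha^3$ has image exactly ${\sf XextS}(G,M)$, using injectivity of $\alpha^3$ from \cite{p}. One small redundancy: the preliminary modification $\sigma_1\mapsto\sigma_1'$ is unnecessary, since weak symmetry demands $\sigma(z,z^{-1})=1$ for \emph{every} $z\in G$, so taking $z=x^{-1}$ already gives $\sigma_1(x^{-1},x)=1$ and hence $f_{s,\sigma_1}(x,x^{-1},x)=0$; your cochain $m$ is identically zero.

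Where your write-up goes beyond the paper is in the reverse inclusion $\Psi(HS^3)\subseteq{\sf XextS}$. The paper observes that for a weakly symmetric $(s,\sigma)$ the identities of Proposition~\ref{3} reduce to those of Definition~\ref{6} and then declares the image to be ${\sf XextS}$, leaving implicit why, given a class in the image, one can find a \emph{weakly symmetric} $(s,\sigma)$ whose cocycle $f_{s,\sigma}$ is symmetric. Your coboundary adjustment $h\mapsto h+du$ to force $h'(x,x^{-1})=0$, with the consistency condition $h(x,x^{-1})=x\cdot h(x^{-1},x)$ supplied by $dh(x,x^{-1},x)=\tilde f(x,x^{-1},x)-f_{s,\sigma_1}(x,x^{-1},x)=0$, is exactly what fills this gap. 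One point you use silently in the forward inclusion: when matching Definition~\ref{6}(i) with Proposition~\ref{3}, the factors $\sigma(x,x^{-1}y)$ and $\,^{s(x)}\sigma(x^{-1},y)$ appear in opposite orders; they commute because their product lies in $\ker\partial\subseteq Z(T)$.
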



\begin{proof} By Lemma \ref{v} any crossed extension has a weakly symmetric $s$-section $(s,\ss)$. By Proposition \ref{3} the corresponding $3$-cocycle is 
symmetric if 
\begin{align*}
^{s(x)}\sigma(x^{-1},y)\sigma(x,x^{-1}y)&=1,\\
 \sigma(x,y)\sigma(xy,y^{-1})&=1.
 \end{align*}
 Now, if we replace $x$ by $x^{-1}$ in the first identity and then act by $s(x)$, we see that these conditions are exactly ones in the Definirion \ref{6}. Hence by Proposition \ref{3}, the image of the composite map $$HS^3(G,M)\xto{\alpha^3} H^3(G,M)\cong   {\sf Xext}(G,M)$$ is exactly ${\sf XextS}(G,M)$. On the 
 other hand,  since $G$ has no elements of order two, the map $\alpha^3$ is injective. This follows from the part ii) of Corollary 4.4 \cite{p}, because $HS^3=H^4_\lambda$, thanks to Theorem 3.9 \cite{p}. It follows that the induced map $HS^3(G,M)\to    {\sf Xext}(G,M)$ is a bijection.
\end{proof}

\section{Interpretation in terms of 2-groups} For us (strict) 2-groups are group objects in the category of small categories. Thus it is a category $\sf C$ (in fact a groupoid) equipped with a  bifunctor $\cdot:\sf C \times C\to C$, $(a,b)\mapsto a\cdot b$ which is strictly associative and satisfies group object axioms. These  objects are also known under the name categorical groups and 1-cat-groups see, \cite{L2}. Recall the relationship between crossed modules and 2-groups \cite{L2}. Let $\partial:T\to R$ be a crossed module. It defines a 2-group $\mathsf{Ca}_{T\to R}$. Objects of  $\mathsf{Ca}_{T\to R}$ are  elements of $R$. A morphism from $r\in R$ to $r'\in R$ is an element $t\in T$ such that
$$r'=\partial(t)r,$$
In this situation we use the notation $r\xto{t}r'$. The composite of arrows $r\xto{t}r'\xto{t'}r''$ is $r\xto{t't}r''$. It is clear that $r\xto{1} r$ is the identity arrow ${\id}_r$ in the category $\mathsf{Ca}_{T\to R}$.  Any morphism in  $\mathsf{Ca}_{T\to R}$ is an isomorphism. The inverse of $r\xto{t}r'$ is $r'\xto{t^{-1}}r$.  As usual we set $M=\ker (\partial)$. Observe that  any $m\in M$ defines  an endomorphism $r\xto{m}r$ of $r\in R$ and conversely, any endomorphism  of $r$ has this form.

The bifunctor $$ \mathsf{Ca}_{T\to R} \times \mathsf{Ca}_{T\to R} \xto{\cdot} \mathsf{Ca}_{T\to R}$$
given on objects by the multiplication rule in the group $R$, while on morphisms it is given by
$$(r\xto{t} z) \cdot (x\xto{s} y)=rx\xto{t(\,^zs)} zy, \ \ r,x,y,z\in R, s,t\in T.$$
In particular, we have
$$(x\xto{t} y) \cdot \id_z =xz\xto{t} yz, $$ 
$${\id}_r \cdot (x\xto{t}y)=rx\xto{^rt} ry.$$

It is well-known that any 2-group is isomorphic to the 2-group of the form $\mathsf{Ca}_{T\to R}$. For a uniquely defined (up to isomorphism) crossed module $\partial:T\to R$, see for example \cite{L2}.

In particular crossed modules gives rise to monoidal categories. So, we can consider monoidal functors. We recall the corresponding definition. Let 
$\sf C$ and $\sf D$ be 2-groups. An \emph{$s$-functor} $\sf C\to D$ is a pair $(F,\xi)$, where $F:\sf C\to D$ is  a functor and 
$\xi$ is a natural transformation from the composite functor  ${\sf C \times C} \xto{\cdot} {\sf C} \xto{F} \sf D$ to the composite funtor ${\sf C \times C} \xto{F\times F} {\sf D\times D} \xto{\cdot} \sf D$
Thus for any objects $x$ and $y$ of $\sf C$ we have a morphism $\xi_{x,y}:F(x\cdot y) \to F(x)\cdot F(y)$, which is natural in $x$ and $y$. 

In what follows, we will assume that $(F,\xi)$ is normalised, meaning that $F(1)=1$ and $\xi_{x,y}=\id$, if $x=1$ or $y=1$. Thus for any object $x$ we have a morphism $\xi(x,x^{-1}):1\to F(x)\cdot F(x^{-1})$, which will be play an important role later. 

  An $s$-functor is \emph{monoidal} if for any objects $x,y,z$ of the category $\sf C$ the  diagram
$$\xymatrix{F(x\cdot y\cdot z)\ar[rr]^{\xi_{x\cdot y,z}}\ar[d]^{\xi_{x,y\cdot z}}
&&F(x\cdot y)\cdot F(z)\ar[d]^{\xi_{x,y}\cdot \id_{F(z)}} \\ F(x)\cdot F(y\cdot z)\ar[rr]_{\id_{F(x)}\cdot \xi_{y, z}}
 &&F(x)\cdot F(y)\cdot F(z)}$$
commutes.
 
 Let  $0\to M \to T \xto{\partial}  R\xto{p} G\to  0$ be a crossed extension.  In this situation we have two 2-groups ${\sf Ca}_{T\to R}$ and ${\sf Ca}_G$. The second one is $G$ considered as a discrete category (equivalently, the $2$-group, corresponding to the crossed module $1\to G$). The homomorphism $p$ yields the strict monoidal functor ${\sf Ca}_{T\to R} \to {\sf Ca}_G$, which is still denoted by $p.$  
 
 One can consider sections of $p$. We will ask different level of compatibility of sections with monoidal structures. The weakest condition to ask to such a section is to be a functor. Since ${\sf Ca}$ is a discrete category, we see that such a section of the functor $p$ is nothing but a set section of the map $p:R\to G$. Next, is to ask to the functor ${\sf Ca}_G\to {\sf Ca}_{T\to R}$ to be an $s$-functor. Call them $s$-sections of $p$. One easily, observes that there is a one-to-one correspondence between $s$-sections $(F,\xi)$ of the functor $p$ and $s$-sections of a crossed extension $0\to M \to T \xto{\partial}  R\xto{p} G\to  0$. In fact, if $(s,\sigma)$ is an $s$-section,
then $(F,\xi)$ is an $s$-functor  ${\sf Ca}_G \to {\sf Ca}_{T\to R}$, for which  $F\circ p=\id_{{\sf Ca}_G}$. Here the functor $F$ and natural transformation $\xi$ are defined as follows. Since ${\sf Ca}_G$ is  a discrete category, the functor $F$ is uniquely determined by the rule:
$$F(g)=s(g), \ g\in G.$$ 
Next, the natural transformation $\xi$ is uniquely determined by the  family of morphisms 
$$\xi_{g,h}=\left(s(gh)\xto{\sigma(g,h)} s(g)s(h)\right), \ g,h\in G.$$
Even stronger assumption is to ask to the pair $(F,\xi)$ to be a monoidal functor. As the following  well-known result shows this condition is a 2-dimensional analogue of a splitting in a short exact sequence.
\begin{Pro} The class of a crossed extension 
$$0\to M \to T \xto{\partial}  R\xto{p} G\to  0$$
is zero iff the strict monoidal functor $p:{\sf Ca}_{T\to R} \to {\sf Ca}_G$ has a section ${\sf Ca}_G \to {\sf Ca}_{T\to R}$, which is monoidal. That is there exists  an $s$-section $(s,\sigma)$ of  $0\to M \to T \xto{\partial}  R\xto{p} G\to  0$, for which the corresponding $s$-functor $(F,\xi)$ is monoidal.
\end{Pro}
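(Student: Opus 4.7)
The plan is to show that the monoidality of the $s$-functor $(F,\xi)$ associated with an $s$-section $(s,\sigma)$ is equivalent to the pointwise identity $f_{s,\sigma}(x,y,z)=1$ for all $x,y,z\in G$, and then to deduce both implications by a standard cochain manipulation, the key algebraic ingredient being the centrality of $M$ in $T$.

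First I compute the two composites in the monoidality square, using the composition rule $r\xto{t}r'\xto{t'}r''=r\xto{t't}r''$ together with $(x\xto{t}y)\cdot\id_z=xz\xto{t}yz$ and $\id_r\cdot(x\xto{t}y)=rx\xto{\,^rt}ry$. The composite $(\xi_{x,y}\cdot\id_{F(z)})\circ\xi_{xy,z}$ is the arrow $s(xyz)\to s(x)s(y)s(z)$ labelled by $\sigma(x,y)\cdot\sigma(xy,z)$, while $(\id_{F(x)}\cdot\xi_{y,z})\circ\xi_{x,yz}$ is labelled by $\,^{s(x)}\sigma(y,z)\cdot\sigma(x,yz)$. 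Equating these two labels and comparing with formula \eqref{3cycle} gives exactly $f_{s,\sigma}\equiv 1$. The implication $(\Leftarrow)$ is now immediate: a monoidal $s$-section produces a normalised $3$-cocycle identically equal to $1$, so its class, and hence the class of the crossed extension under $\xi$, vanishes.

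For $(\Rightarrow)$, take any normalised $s$-section $(s,\sigma)$; its cocycle $f$ is by hypothesis a coboundary, and by the classical normalisation theorem we may write $f=dg$ for some $g\in C^2_N(G,M)$. Keep $s$ and replace $\sigma$ by $\sigma'(x,y):=g(x,y)^{-1}\sigma(x,y)$. Since $g(x,y)\in M=\ker\partial$, the identity $\partial\sigma'(x,y)=s(x)s(y)s(xy)^{-1}$ is preserved, so $(s,\sigma')$ is again a normalised $s$-section. A direct expansion of $f_{s,\sigma'}$ yields $f_{s,\sigma'}=(dg)^{-1}\cdot f_{s,\sigma}=1$, and the associated $s$-functor is monoidal.

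The only step requiring genuine care is the expansion of $f_{s,\sigma'}$: one must commute the $M$-valued $g$-terms past the $T$-valued $\sigma$-terms so that they assemble into $(dg)^{-1}$. This is legitimate because the crossed module axiom $\,^{\partial t}s=tst^{-1}$ applied to $t\in M$ (so $\partial t=1$) shows that $M$ is central in $T$, and the same axiom shows that the $R$-action on $M$ descends to the prescribed $G$-action (if $r'=\partial(t)r$ then $\,^{r'}m=t(^rm)t^{-1}=\,^rm$ by centrality). With these two facts the computation is routine and no further obstacle arises.
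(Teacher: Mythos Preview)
Your argument is correct and follows the same route as the paper: identify commutativity of the monoidality square with the vanishing of the $3$-cocycle \eqref{3cycle}, and for the converse subtract a $2$-cochain realising the coboundary from $\sigma$. Your write-up is in fact more careful than the paper's, since you invoke the normalisation theorem to take $g\in C^2_N(G,M)$ (needed so that $(s,\sigma')$ remains normalised) and you spell out why centrality of $M$ in $T$ and the descent of the $R$-action to $G$ legitimise the rearrangement $f_{s,\sigma'}=(dg)^{-1}f_{s,\sigma}$; the paper simply asserts the analogous identity.
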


Since  we did not find appropriate reference we give the proof.

\begin{proof} Let $(s,\sigma)$ be  an $s$-section of $0\to M \to T \xto{\partial}  R\xto{p} G\to  0$. Then the diagram in the definition of the monoidal functor for $(F,\xi)$ has the form
$$\xymatrix{s(x\cdot y\cdot z)\ar[rr]^{\sigma(x\cdot y,z)}\ar[d]^{\sigma(x,y\cdot z)}
&&s(x\cdot y)\cdot s(z)\ar[d]^{\sigma(x,y)} \\ s(x)\cdot s(y\cdot z)\ar[rr]_{^{s(x)} \sigma(y, z)}
&&s(x)\cdot s(y)\cdot s(z)}$$
Thus commutativity of this diagram is equivalent to the vanishing of the 3-cocycle  $f$ in (\ref{3cycle}). 
So, if part is done. Conversely, if $f_\sigma$ defined in (\ref{3cycle}) is a coboundary and
$$f_\sigma(x,y,z)=xk(y,z)-k(xy,z)+k(x,yz)-k(x,y)$$
for a function $k:G^2\to M$, then $(s,\tau)$ is also an $s$-section for which $f_\tau=0$. Here $\tau(g,h)=\sigma(g,h)-k(g,h)$.
\end{proof}

We now introcuce another conditions to an $s$-functor $(F,\xi)$, which are weaker then the monoidal functor. 

\begin{De} An $s$-functor $(F,\xi)$ is called symmetric, if 
$$\xymatrix{
F(x)\ar[rrr]^{\xi(xy,y^{-1})}\ar[drrr]_{\id_{F(x)}\cdot \xi(y,y^{-1})}
&&&F(xy)\cdot F(y^{-1})\ar[d]^{\xi(x,y)\cdot \id_{F(y^{-1})}}\\
&&&F(x)\cdot F(y)\cdot F(y^{-1})
}
$$
and
$$\xymatrix{
F(y)\ar[rrr]^{\xi(x,x^{-1}y)} 
\ar[drrr]_{\xi(x,x^{-1})\cdot \id_{F(y)}}
&&&F(x)\cdot F(x^{-1}y)\ar[d]^{\id_{F(x)}\cdot \xi(x^{1},y)}\\
&&&F(x)\cdot F(x^{-1})\cdot F(y)
}
$$

\end{De}

Then we have the following obvious facts: 

\begin{itemize}
\item Any monoidal functor is symmetric $s$-functor. 

\item The $s$-functor corresponding to an $s$-section $(s,\ss)$ is symmetric, if $(s,\ss)$ satisfies the condition listed in Proposition \ref{3}.
\end{itemize}

Thus  2-groups, for which the corresponding class in $H^3(G,M)$ lies in the image of  $HS^3(G,M)$ can be characterized, as those, for which there exists a symmetric $s$-functor  ${\sf Ca}_G \to {\sf Ca}_{T\to R}$, which is a section of $p:{\sf Ca}_{T\to R}\to {\sf Ca}_G $.

\end{document}